\newtheorem{theorem}{Theorem}[section]
\newenvironment{proof}[1][]{\par\noindent\textbf{Proof #1: }}{\hfill\rule{1.3ex}{1.3ex}\\\par}
\begin{document}

\pagestyle{myheadings}
\markright{G{\'o}recka et al.: Description of the emotional states of communicating people by mathematical model}

\noindent\begin{tabular}{|p{\textwidth}}
	\Large\bf Description of the emotional states of communicating people by mathematical model\\\vspace{0.01cm}
    \it Joanna G{\'o}recka$^1$,\\
    \it Urszula Fory{\'s}$^2$,\\
    \it Monika Joanna Piotrowska$^{2,*}$\\\vspace{0.02cm}
\it\small $^1$College of Inter-Faculty Individual Studies in Mathematics and Natural Sciences,\\
\it\small University of Warsaw, \.Zwirki i~Wigury 93, 02-089 Warsaw, Poland\\\vspace{0.01cm}
\it\small $^2$Institute of Applied Mathematics and Mechanics,\\
\it\small Faculty of Mathematics, Informatics and Mechanics,\\
\it\small University of Warsaw, Banacha 2, 02-097 Warsaw, Poland\\\vspace{0.01cm}
\footnotesize  \texttt{$^*$ monika@mimuw.edu.pl},\\
    \multicolumn{1}{|r}{\large\color{orange} Research Article} \\
	\\
	\hline
\end{tabular}
\thispagestyle{empty}

\tableofcontents
\noindent\begin{tabular}{p{\textwidth}}
	\\
	\hline
\end{tabular}

\begin{abstract}
The model we study is a generalization of the model considered by Liebovitch~\textit{et al.}~(2008) and Rinaldi~\textit{et al.}~(2010), and is related to the discrete model of the emotional state of communicating couples described by Gottman~\textit{et al.}~(2002). Considered system of non-linear differential equations assumes that the emotional state of a person at any time is affected by the state of each actor alone, rate of return to that state, partner's emotional state and mutual sympathy. Interpreting the results, we focus on the analysis of the impact of a person's attitude to life (optimism or pessimism) on establishing emotional relations. It occurs that our conclusions are not always obvious from the psychological point of view.
\end{abstract}

Keywords:
ordinary differential equations; steady states; stability; emotional state; communication; relationship \\

MSC:  
34D20;  
34D23;  
92C30 






\section{Introduction}\label{sec:intro}

People are social beings by nature. Almost all our life we talk,  work or play with someone; contacts with other 
persons have a~strong influence on our emotions. This impact can be positive or negative depending on various 
factors and is most visible in the relationship between partners. People can like each other, and then a~good 
mood of one person positively affects the mood of the other one, while a~bad mood has a~negative effect. Two 
persons may also not like each other, and then an emotional impact of one partner to the other is opposite. 
Hence, a~sadness of one person improves the mood of the other one and \textit{vice versa}. The last 
possibility is that the relationship is mixed, and one person has a  negative attitude to the second one, who is 
geared to their friendship. In such a~relationship the emotional impact of one partner to the other is a~compound of previously described influences.

If we assume that our emotional state is influenced only by other people, then the obvious strategy is to be friendly for all the close-knit people and to avoid relationships with people who have negative attitude to us. In general, we hope to be like that, however if that is true, then everyone would be friendly and happy, and it is not the case. 
  Although one can try to explain that effect by saying that the feeling of pleasure is not 
the only desire of a~man and different random events can destroy the balance, it would still not represent the 
full picture. 

On the other hand, the individual factors also have an impact on our emotional state. 
These are certainly: the attitude to life (optimism and pessimism) and how strong is the influence of the current 
mood of a~person on the change of her/his emotional state.

To analyse the impact of the level of optimism on the profits from the meeting of people with different attitude we decided to base on a~simple mathematical model considered earlier by Liebovitch {\it et al.}~\cite{Leibovitch2008} and Rinaldi {\it et al.}~\cite{Rinaldi2010}. In this model the impact of the factors  mentioned above on the change of emotional states of two considered persons was described. We study this model from a~different point of view than Liebovitch {\it et al.} or Rinaldi {\it et al}. Moreover, we introduce some modifications to the model, which  in our opinion allow to better reflect real relationships.

\section{Model Description}\label{sec:mod}

Many models describing interactions between people follow the idea of marital interactions described 
in~\cite{Gottman2002, Murray2002}. Such models can even be linear, as considered 
in~\cite{Strogatz1988, Strogatz1994, Felmlee1999, Rinaldi1998_2, Bielczyk2012, nbuftp11jms,nbmbufjp10kkzmbm}. However, nonlinear 
models seem to be more appropriate. 
 One of such models is the model of emotional states of communicating people considered 
in~\cite{Leibovitch2008}.  
In this paper we base on the Liebovitch {\it et al.} model~\cite{Leibovitch2008}, proposing some changes and 
a~new interpretation. It should be marked here, that the model of the same structure was also considered by Rinaldi and his co-authors in the series of papers~\cite{Rinaldi2010, Rinaldi1998_1, Rinaldi2013, Rinaldi2014, Rinaldi2015}, however our interpretation is more close to those given in~\cite{Leibovitch2008}.

\subsection{Model history}

Before we present the model considered in this paper, we shall say few words about its roots. The prototype of 
the Leibovith {\it et al.} model is the model of marital interactions created by Gottman and Murray, which 
directly comes from empirical research~\cite{Gottman2002}. At the end of the 20th century Gottman 
conducted an experiment in which married couples with problems possibly leading to dissolution took part. In his clinic, Gottman 
 observed 15-minute conversations each of the 73 couples on a difficult 
subject. Wife and husband alternately spoken, and each positive and negative communication (verbal or nonverbal) 
during the conversation was recorded. This gave an observational code of interactive behaviour called RCISS (Rapid Couples Interaction Scoring System).  Finally, for each couple two series of data were obtained and reflected in a graph in which the differences of positive and 
negative messages of wife and husband in every ``round'' were marked. According to the 
experimental data the system of equations, which model this difference, was postulated
\begin{equation}\label{sys:dys}
\begin{split}
W_{t+1}&=I_{HW}(H_t)+r_1W_t+a,\\
H_{t+1}&=I_{WH}(W_{t+1})+r_2H_t+b,
\end{split}
\end{equation}
where $W_t$, $H_t$ are the scores of wife and husband in round $t$, respectively, constants $a$, $b$ and $r_i$ ($i=1,2$) 
determine the rate at which individual returns to independent steady state, and $I_{AB}
(A_t)$ is a function of the impact of person $A$ on $B$ in the round $t$. 
The equations of system~\eqref{sys:dys} are non-symmetric because the wife talked first in each round. 
Discrete-time model was used because it can easily reflect the experimental data.  
 On the  other hand, one can also construct an analogous continuous-time model, as it was noticed by Murray~\cite{Murray2002}. Such a model, 
 describing the emotional state of communicating people, could also be used 
in more general situation. Simply one needs to assume that happy person sends positive signals,  while the 
negative signals are sent by unhappy person.  Based on the model reflected by~\eqref{sys:dys}  Leibovith~{\it et al.} 
proposed a continuous model, which we modify and analyse  in detail in this paper from different perspective.

\subsection{Presentation of the interaction model}

In this subsection we present continuous-time model on which our analysis is based. The model of such a structure  was earlier considered by   Liebovitch~{\it et al.}~\cite{Leibovitch2008} and also by Rinaldi {\it et al.}~\cite{Rinaldi2010}. However, interpretation of the model parameters is different in the papers of Rinaldi and co-authors~\cite{Rinaldi2010, Rinaldi1998_2, Rinaldi1998_1, Rinaldi2013, Rinaldi2014, Rinaldi2015}. 

Let $x(t)$ and $y(t)$ reflect the emotional states of two distinguishable individuals at time $t$. We consider 
the system of differential equations that reads
\begin{equation}
\label{podst_ukl_2}
	\begin{aligned}
		\dot x(t) &= -m_{1}x(t) + b_{1} + c_{1}f_{1}(y(t)), \\
		\dot y(t) &= -m_{2}y(t) + b_{2} + c_{2}f_{2}(x(t)),
	\end{aligned}
\end{equation}
where constants $ m_ {1} $ and $ m_ {2} $ describe the rate of change of the mood of each 
person in solitude, which can be also referred as to forgetting coefficients, 
$ b_ {1} $ and $ b_ {2} $ reflect some ``ideal/reference'' mood of each person, functions $f_{1}$ and $f_2$ 
describe the impact of the emotional state of a~person $y$ or $x$, respectively, on the emotional state of the 
other person, while constants $ c_ {1} $ and $ c_ {2} $ determine the strength and direction of these 
influences. If there is no such as influence, that is for $ c_ {1} = c_ {2} = 0 $, and $m_i>0$, similarly as hypothesised in~\cite{Gottman2002}, then system~\eqref{podst_ukl_2} returns to the steady state $ ( \tfrac {b_ {1}} {m_ {1}},\tfrac {b_ {2}} {m_ {2}}) $, where the  state $ \tfrac {b_ {i}} {m_ {i}}$ is called uninfluenced equilibrium for person $i$ in~\cite{Gottman2002}. 
 When both $c_1$ and $c_2$ are positive, people have a~positive attitude to each other, while for 
$ c_ {1}$, $c_ {2} 
<0 $ they have a~negative attitude to each other. Clearly, for $ c_ {1}> 0 $ and $ c_ {2} <0 $ 
a~person $ x 
$ has a~positive attitude towards $ y $ and $ y $ has a~negative attitude to $ x $.
As we describe interactions between two persons, we assume $c_1\cdot c_2 \ne 0$. 
Moreover, following the ideas of Gottman {\it et al.} we assume that 
\[ m_ {1}, \,m_ {2} >0,\]
and thus the person being in 
isolation, not influenced by other person,
approaches his/her steady state $\tfrac{b_i}{m_i}$. 

Under the assumption above, the person characterised by positive parameter $ b_ {i} $ has a positive steady state and is called an 
optimists, while those with negative   parameter -- a pessimists. 
Rinaldi {\it et al.}~\cite{Rinaldi2010} gave completely different interpretation of this parameter. In their interpretation $b_1$ reflects appeal of the person $y$ for $x$, so when the person is in solitude, this parameter is just equal to $0$, as well as uninfluenced steady state, because there is no love/hate whenever there is no object of these emotions. It should be marked that although Rinaldi got interesting results using this interpretation (e.g. he was able to explain the case of Beauty and the Beast~\cite{Rinaldi2013}), 
 we shall not follow this idea, but use the interpretation of Gottman~\cite{Gottman2002} and then Liebovitch~\cite{Leibovitch2008}. 

Various particular influence functions $f_i$ were considered in the literature, for details see {\it e.g.} 
\cite{Leibovitch2008, Gottman2002, Murray2002, Rinaldi2013}. However, in this paper we propose $f_i$ in more general 
form based on the prospect theory of decision making problems.

We should also marked that under our interpretation the model described by~\eqref{podst_ukl_2} reflects the emotional state of people during a~single meeting with 
a~partner, but not, for example, a~series of meetings. This is because between two meetings people meet other 
people or spend time in solitude, which affects their mood at the beginning of the meeting, and therefore also 
the final result.

\subsection{Influence functions}
The prospect theory proposed by Kahneman and Tversky~\cite{Kahneman1979} relates to the wider issues of risk 
assessment and an attitude of men to the risk. Here, we briefly introduce the assumptions, which are useful 
from our point of view. There are three main principles of profit and loss assessment by people. First, generally 
we experience losses much stronger than the profits of the same value. Second, everyone defines their own 
criteria with respect to which results of the decision are evaluated as a~gain or loss. Third, every unit of gain is 
enjoyed with diminishing efficiency, and each consecutive loss is less saddening.

Although the prospect theory describes the relationship between profits or losses and satisfaction, we believe 
that it can be used to describe the mutual influence of partners' emotional states. When we are alone our 
emotional state depends only on our character. When we meet a~friend who is happy, we gain his emotions, 
but not in the literal sense. We react to his/her smile, lively tone of voice, {\it etc}. When we get a~more 
positive stimulus we enlarge our profit more. However, according to the prospect theory, each additional unit 
gives less and less profit. Therefore, influence functions $f_i$ are certainly non-linear. The first derivative of it 
should be positive, decreasing for positive variables and increasing for negative ones. Moreover, it should tend to 
0 in $\pm \infty$, because otherwise $f_i$ become almost linear asymptotically. 

Another issue is that generally we feel losses much stronger than profits of the same value. Indeed, people 
recognise negative emotions faster, more accurately and more strongly than positive ones. It is an adaptive 
process, because when we talk about surviving, the ability to recognise and quickly respond to the feeling of fear or anger is more important than joy. Hence, the negative experience makes us more sad than the same weight positive 
experience makes us happy.

Last feature of that theory stating that everyone assesses gains and losses from his/her own point of view, actually, 
is not so important from our model point of view. Benchmark is always a~state in solitude, that means 
a~situation in which the environmental impact is equal to zero. However, if we do not like someone, then his 
negative emotions are a~profit for us, and the positive emotions are our loss,  while if we like someone, it is 
\textit{vice versa}.



Concluding, to address the issues described above we assume: 

\[
f_{i}\in \mathbf{C}^{2}, \quad f_{i}(0) =  0, \quad f_{i}'(\xi)>0,
\]
and
\[ 
\lim\limits_{|\xi| \to \infty}f_i'(\xi)=0 , \quad \xi f_{i}''(\xi)< 0 \ \ \text{for} \ \ \xi \ne 0,  \quad \textrm{for}  \quad
 i=1, \ 2.
\]

\noindent
 Moreover, to more explicitly show that the force of impact of partners on themselves depends on the value of 
$ | c_ {i} | $   we also assume that 
\[f_{i}'(0) = 1, \quad \textrm{for}\quad i=1, \ 2.\]

Exemplary graphs of influence functions are shown in Fig.~\ref{wp}. Arrows indicate how to read the graphs. 

\begin{figure}[!ht]
\centering\includegraphics[height=5cm]{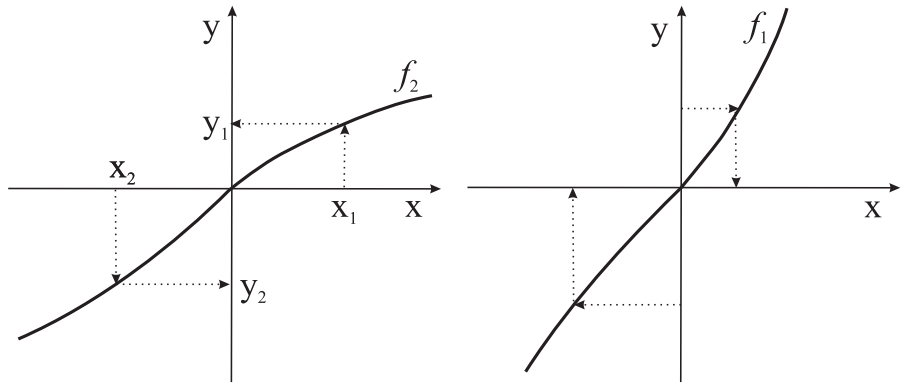}
\caption{Examples of 
the influence function.}\label{wp}
\end{figure}

It is worth to notice that functions $ f_i $ may have a~different shape because people differ in ability to 
distinguish emotions, and consequently -- reactions to them. This means that $ f_ {1} '(y) $ and $ f_ {2}' (x) $ 
can change with different rates.
Clearly, proposed interaction functions are defined in the general form, and hence we actually consider the 
whole family of functions. On the other hand, the interaction functions considered previously in the literature 
belong to this family.

\section{Model analysis}\label{sec:an}

Under our assumptions, it is obvious that unique solutions of Eqs.~\eqref{podst_ukl_2} exist, and moreover due to properties of the functions $f_i$, any solution can be  
prolonged on the whole interval $[0, \infty)$. These basic properties do not depend on initial data, the model parameters and forms of $f_i$. Clearly, without particular forms of the influence functions we 
are not able to determine steady states explicitly. Hence, assume $(x_{s}, y_{s})$ to be a~solution of the 
system 
\begin{equation}\label{ss}
	\left\{
	\begin{aligned}
		x = \frac{ b_{1} + c_{1}f_{1}(y)}{m_{1}}, \\
		y = \frac{ b_{2} + c_{2}f_{2}(x)}{m_{2}},
	\end{aligned}
	\right.
\end{equation}
{\it i.e.} $(x_s,y_s)$ is a~steady state. 
The first equation of~\eqref{ss} describes null-cline $I_1$ for the first variable $x$, while the second one -- 
$I_2$ for $y$. Positions of $I_1$ and $I_2$ in the phase space $(x,y)$ depend on the model parameters and specific forms of $f_i$. In 
the analysis presented below we treat both null-clines as functions $g_i(x)$. Therefore, $g_2$ is defined for all 
$x \in \mathbb{R}$ with $g_2'(x) \to 0$ as $|x| \to \infty$, while $g_1$ takes all values from $\mathbb{R}$  
and $g_1'(x) \to \infty$ as $x$ tends to the end of its domain (either $\mathbb{R}$ or some bounded 
interval). 

 Clearly, if $c_1c_2<0$, then one of the null-clines is increasing, while the other is decreasing, and  therefore they 
intersect at exactly one point. If $c_1\cdot c_2>0$, then Eqs.~\eqref{ss} has always from one to three 
solutions depending on the parameter values. We briefly discuss the case $c_1$, $c_2>0$, as the case with 
negative parameters is symmetric. Consider first    $b_1=b_2=0$. Then null-clines always intersect at $x=0$ and 
other intersection points appear when $g_2'(0)>g_1'(0)$, that is $c_2/m_2>m_1/c_1$, while for $g_2'(0)\leq 
g_1'(0)$  there is only one intersection.
For $g_2'(0)\leq g_1'(0)$   shifting null-clines  does not change the situation, while for $g_2'(0)> g_1'(0)$, if we 
shift at least one of the null-clines sufficiently far from the origin, then additional intersection points disappear. 
\begin{enumerate}
\item If 
\begin{itemize}
\item either $c_{1}c_{2} \leq m_{1}m_{2} $, 
\item or $c_{1}c_{2} > m_{1}m_{2}$ and there is no solution of Eqs.~\eqref{ss} satisfying 
$f_{1}'(y_{s})f_{2}'(x_{s}) \geq \frac{m_{1}m_{2}}{c_{1}c_{2}}$,
\end{itemize}
 then System~\eqref{podst_ukl_2} has one steady state.
\item If $c_{1}c_{2} > m_{1}m_{2}$, $b_{1} \neq 0$ or $b_{2} \neq 0$, and solutions of Eqs.~\eqref{ss} fulfil 
the conditions $f_{1}'(y_{1})f_{2}'(x_{1}) = \frac{m_{1}m_{2}}{c_{1}c_{2}}$ and 
$f_{1}'(y_{2})f_{2}'(x_{2}) < \frac{m_{1}m_{2}}{c_{1}c_{2}}$, then 
System~\eqref{podst_ukl_2} has two steady states.
\item If $c_{1}c_{2} > m_{1}m_{2}$ and there exists a solution of Eqs.~\eqref{ss} fulfilling  
$f_{1}'(y_{s})f_{2}'(x_{s}) > \frac{m_{1}m_{2}}{c_{1}c_{2}}$, then 
System~\eqref{podst_ukl_2} has three steady states. Two additional steady states $(x_{i},y_{i})$, $i \in \{1,2\}$ 
fulfil $f_{1}'(y_{i})f_{2}'(x_{i}) < \frac{m_{1}m_{2}}{c_{1}c_{2}}$.
\end{enumerate}

To study the stability of steady states we calculate the Jacobi matrix at a~steady state $ (x_ {s}, y_ {s}) $,
\begin{equation*}
	J(x_{s}, y_{s}) = \left(
		\begin{array}{cc}
		-m_{1} & c_{1}f_{1}'(y_{s}) \\
		c_{2}f_{2}'(x_{s}) & -m_{2}
		\end{array}
	\right),
\end{equation*}
and the corresponding characteristic polynomial 
\[
W(\lambda) = \lambda^{2} - A\lambda + B, 
\]
with 
\[ 
A= -(m_{1} + m_{2})<0, \quad B = 
m_{1}m_{2} - c_{1}c_{2}f_{2}'(x_s)f_{1}'(y_s).
\]
 Consequently, eigenvalues are equal to $\lambda_{1,2} = \frac{A~\pm \sqrt{A^{2} - 4B}}{2}$. 

To have locally asymptotically stable steady state $ (x_ {s}, y_ {s}) $ we  look  for  eigenvalues with negative real 
parts. From the assumptions of the model ($ m_ {1}$, $m_ {2} >0 $) we have $ A~<0 $. 
Therefore, stability depends only on the value of $B$. We see that if $ B $ is positive, then the 
steady state $ (x_ {s}, y_ {s}) $ is stable, and it is either focus (for $A^2-4B<0$) or node (for $A^2-4B>0$), while if $B$ is negative, $(x_s,y_s)$ is a saddle. 
Moreover, whenever there exists the unique steady state $(x_s,y_s)$ and the impact of partners is not higher than the inner dynamics of the interacting  persons, then $(x_s,y_s)$  is globally stable, while if there are three steady states, then the dynamics changes to bi-stable. In the case of partners having opposite attitude  to each other the steady state is unique, and moreover it is globally stable independently of other model parameters.

More precisely, for the case with unique steady state, and we are able to prove the following theorems 
\begin{theorem}\label{th:1}
If $m_1m_2>|c_1c_2|$ and System~\eqref{podst_ukl_2} has exactly one steady state $(x_s,y_s)$ satisfying Eqs.~\eqref{ss}, then this state is globally stable. 
\end{theorem}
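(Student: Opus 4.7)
The plan is to construct a global quadratic Lyapunov function adapted to the coupling structure. First I shift the equilibrium to the origin via $u=x-x_{s}$, $v=y-y_{s}$, so that~\eqref{podst_ukl_2} becomes $\dot u = -m_{1}u + c_{1}\phi_{1}(v)$, $\dot v = -m_{2}v + c_{2}\phi_{2}(u)$ with $\phi_{i}(\xi)=f_{i}(\xi_{s}+\xi)-f_{i}(\xi_{s})$ and $\phi_{i}(0)=0$. The assumptions $f_{i}'>0$, $f_{i}'(0)=1$ together with $\xi f_{i}''(\xi)<0$ for $\xi\ne 0$ force $f_{i}'$ to attain its maximum $1$ at the origin; hence $0<\phi_{i}'\le 1$, and the mean value theorem supplies the global Lipschitz estimate $|\phi_{i}(\xi)|\le|\xi|$ that will drive the rest of the argument.

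Next I would propose the quadratic candidate $V(u,v)=\tfrac12\bigl(u^{2}+\alpha v^{2}\bigr)$ with a weight $\alpha>0$ to be fixed. Differentiating along trajectories and applying the Lipschitz bound yields
\[
\dot V \;\le\; -m_{1}u^{2}-\alpha m_{2}v^{2}+(|c_{1}|+\alpha|c_{2}|)\,|u|\,|v|,
\]
so the whole problem collapses to showing that the quadratic form $Q(p,q)=m_{1}p^{2}+\alpha m_{2}q^{2}-(|c_{1}|+\alpha|c_{2}|)\,pq$ is positive definite, which by Sylvester is equivalent to $4m_{1}m_{2}\alpha>(|c_{1}|+\alpha|c_{2}|)^{2}$.

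The crucial step is to pick a weight for which this inequality is \emph{equivalent} to the hypothesis $m_{1}m_{2}>|c_{1}c_{2}|$. The symmetric choice $\alpha=|c_{1}|/|c_{2}|$ (well defined since $c_{1}c_{2}\ne 0$) turns the two sides of the determinant condition into $4m_{1}m_{2}|c_{1}|/|c_{2}|$ and $4c_{1}^{2}$, so the inequality collapses precisely to $m_{1}m_{2}>|c_{1}c_{2}|$. With this in hand $\dot V<0$ off the origin; $V$ is continuous, positive definite and radially unbounded, so the Barbashin--Krasovskii global Lyapunov theorem delivers global asymptotic stability of $(x_{s},y_{s})$, and boundedness of every trajectory follows as a free by-product of the coerciveness of $V$.

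The only non-routine point I expect is spotting the correct weight $\alpha$; once it is identified everything reduces to elementary linear algebra together with the one-line Lipschitz estimate on $\phi_{i}$. An alternative route would combine Dulac's criterion with $B\equiv 1$ (the divergence of the vector field is $-(m_{1}+m_{2})<0$, ruling out periodic orbits) and Poincar\'e--Bendixson with the local stability of $(x_{s},y_{s})$ already established by the sign of $B$ in the preceding paragraph, but that path still requires a separate boundedness argument, so the weighted quadratic Lyapunov function remains the cleanest line of attack.
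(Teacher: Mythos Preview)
Your proof is correct and follows essentially the same route as the paper: a weighted quadratic Lyapunov function $V=\tfrac12(u^{2}+\alpha v^{2})$ combined with the global Lipschitz bound $0<f_{i}'\le 1$ (the paper phrases the latter via the mean value theorem rather than the shifted functions $\phi_{i}$, but the content is identical). Your choice $\alpha=|c_{1}|/|c_{2}|$ is in fact a bit cleaner than the paper's $C=(2m_{1}m_{2}-|c_{1}c_{2}|)/c_{2}^{2}$, since it makes the determinant condition collapse to $m_{1}m_{2}>|c_{1}c_{2}|$ in one line.
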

\begin{proof}
Proving global stability we use the method of Lyapunov functions. Let us define
\[
V(x,y)=\frac{1}{2}\left( x-x_s \right)^2+ \frac{C}{2}\left( y-y_s \right)^2 ,
\]
where $C>0$ is a constant, which should be chosen in appropriate way.
Calculating the derivative along trajectories of System~(2)
we get
\[
\frac{dV}{dt}=\left( x-x_s \right)\left(-m_1x+b_1+c_1 f_1(y)\right)+
C \left( y-y_s \right)\left(-m_2y+b_2+c_2 f_2(x)\right) .
\]
Using the relations $b_1=m_1 x_s -c_1 f_1(y_s)$, $b_2=m_2 y_s -c_2 f_2(x_s)$, and the mean value theorem  we obtain
\[
\frac{dV}{dt}= -\Big( m_1 \left(x-x_s\right)^2 +C m_2\left(y-y_s\right)^2 - \left(c_1f_1'(y_p) +C c_2f_2'(x_p)\right)\left(x-x_s\right)\left(y-y_s\right)
\Big) ,
\]
where $x_p$ and $y_p$ are the points between $x$, $x_s$ and $y$, $y_s$, respectively, and the right-hand side could be treated as a quadratic form of $x-x_s$ and $y-y_s$. Hence, we need to study positivity of the matrix
\[
M=\left( \begin{matrix}
m_1 & - \frac{1}{2}\left(c_1f_1'(y_p) +C c_2f_2'(x_p)\right) \\
- \frac{1}{2}\left(c_1f_1'(y_p) +C c_2f_2'(x_p)\right) & C m_2
\end{matrix}\right) .
\]
The matrix $M$ is positive under the assumptions $m_1>0$, $\det M>0$. The first assumption is always satisfied for our system, while the second one is equivalent to the following inequality
\[
C^2c_2^2(f_2'(x_p))^2 +2 C\left( c_1c_2f_1'(y_p)f_2'(x_p) - 2m_1m_2\right)
+ c_1^2(f_1'(y_p))^2  <0 .
\]
As $0<f_1'f_2'\leq 1$ under our assumptions, it is enough to choose $C>0$ such that
\[
C^2c_2^2 -2 C\left(   2m_1m_2-|c_1c_2|\right)
+ c_1^2  <0 ,
\]
which under the assumption $m_1m_2>|c_1c_2|>|c_1c_2|/2$ has real positive solutions, and we can choose $C=\tfrac{2m_1m_2-|c_1c_2|}{c_2^2}>0$,  which gives minimum of the quadratic function above. Therefore, the function $V$ satisfies all assumptions guaranteeing the global stability of $(x_s,y_s)$.
 \end{proof}

\begin{theorem}\label{th:2}
If $c_1c_2<0$, then System~\eqref{podst_ukl_2} has unique steady state $(x_s,y_s)$ satisfying Eqs.~\eqref{ss}, which is globally stable.
\end{theorem}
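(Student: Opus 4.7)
My plan is to design a Lyapunov function tailored to the opposite-sign coupling so that the nonlinear cross term cancels exactly, rather than being estimated through a smallness assumption on the parameters as in Theorem~\ref{th:1}. Uniqueness of the steady state is already contained in the discussion preceding the theorems: when $c_1c_2<0$ one null-cline is strictly monotone increasing and the other strictly monotone decreasing, so they meet at exactly one point $(x_s,y_s)$. Hence only global attraction remains to be proved.

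Before committing to an ansatz, I would test the quadratic Lyapunov function of Theorem~\ref{th:1}. A short computation shows that its cross term carries the factor $c_1f_1'(y_p)+Cc_2f_2'(x_p)$, and because $f_1'$ and $f_2'$ may independently be close to $1$ in one coordinate and close to $0$ in the other, no choice of the weight $C$ suppresses this factor below $\max(|c_1|,C|c_2|)$ uniformly; this route reintroduces a condition of the form $4m_1m_2>|c_1c_2|$, which is not available here. To bypass it I take
\[
V(x,y)=|c_2|\int_{x_s}^{x}\bigl(f_2(s)-f_2(x_s)\bigr)\,ds+|c_1|\int_{y_s}^{y}\bigl(f_1(s)-f_1(y_s)\bigr)\,ds,
\]
where the influence functions themselves enter the definition. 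Strict monotonicity of $f_i$ makes each integrand have the same sign as the variable of integration minus the corresponding steady-state coordinate, so both integrals are nonnegative and vanish only at $(x_s,y_s)$. The same monotonicity yields $f_i(\xi)-f_i(\xi_s)\ge f_i(\xi_s+1)-f_i(\xi_s)>0$ for $\xi\ge\xi_s+1$ and an analogous bound on the other side, which immediately gives radial unboundedness of $V$.

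Differentiating $V$ along solutions of System~\eqref{podst_ukl_2} and using the equilibrium relations $b_1=m_1x_s-c_1f_1(y_s)$, $b_2=m_2y_s-c_2f_2(x_s)$ produces
\[
\begin{aligned}
\dot V ={}& -|c_2|m_1\bigl(f_2(x)-f_2(x_s)\bigr)(x-x_s) - |c_1|m_2\bigl(f_1(y)-f_1(y_s)\bigr)(y-y_s) \\
& + \bigl(|c_2|c_1+|c_1|c_2\bigr)\bigl(f_1(y)-f_1(y_s)\bigr)\bigl(f_2(x)-f_2(x_s)\bigr).
\end{aligned}
\]
The crucial step, and the whole reason for choosing the weights $|c_2|$ and $|c_1|$, is that $c_1c_2<0$ forces $|c_2|c_1+|c_1|c_2=0$ identically, so the mixed term vanishes with no smallness condition on the parameters. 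The two surviving terms are nonpositive by monotonicity of $f_i$, each vanishing only at $x=x_s$ or $y=y_s$ respectively, hence $\dot V<0$ away from the equilibrium.

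Positive definiteness and radial unboundedness of $V$ combined with $\dot V<0$ off $(x_s,y_s)$ then yield global asymptotic stability by the classical global Lyapunov theorem. The main obstacle is really one of design rather than estimate: once one sees that the weights $|c_2|$, $|c_1|$ convert the sign hypothesis $c_1c_2<0$ into an exact cancellation of the nonlinear cross term, every remaining step reduces to a routine monotonicity check.
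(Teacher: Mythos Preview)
Your proof is correct and essentially identical to the paper's: after the substitution $s=\xi+x_s$ (resp.\ $s=\xi+y_s$), your Lyapunov function $V$ coincides with the paper's function $L$, and both arguments hinge on the same cancellation $|c_2|c_1+|c_1|c_2=0$ forced by $c_1c_2<0$. The only cosmetic difference is that the paper rewrites the surviving terms via the mean value theorem as $-m_1|c_2|u^2f_2'(u_p)-m_2|c_1|v^2f_1'(v_p)$, whereas you appeal directly to monotonicity of $f_i$; your version is in fact slightly cleaner.
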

\begin{proof}
Uniqueness of $(x_s,y_s)$ is obvious. Proving global stability we again use the method of Lyapunov functions. We start with changing variables of System~(2)
such that the steady state $(x_s,y_s)$ is shifted to $(0,0)$. We define $u=x-x_s$ and $v=y-y_s$ for which we obtain
\begin{equation}\label{podst_ukl_shift}
	\left\{
	\begin{aligned}
		\dot u &= -m_{1}u +  c_{1}\left( f_{1}(v+y_s)-f_1(y_s)\right) , \\
		\dot v &= -m_{2}v +  c_{2}\left( f_{2}(u+x_s)-f_2(x_s)\right),
	\end{aligned}
	\right.
\end{equation}
due to Eqs.~\eqref{ss}. Let us consider
\[
L(u,v)=|c_1|\int\limits_0^v \left(f_1(\xi+y_s)-f_1(y_s)  \right)d\xi
+ |c_2|\int\limits_0^u \left(f_2(\xi+x_s)-f_2(x_s)  \right)d\xi .
\]
Because $f_i$ are increasing functions, we have $L(u,v)=0$ iff $u=v=0$ and $L(u,v) \to \infty$ for $u\to \infty$ or $v \to \infty$. Calculating the derivative of $L$ along trajectories of System~\eqref{podst_ukl_shift} 
and using the relation $|c_1|c_2=-|c_2|c_1$
we obtain
\[
\dot L(u,v)=-m_2 |c_1| v \left(f_1(v+y_s)-f_1(y_s)\right) 
-m_1 |c_2| u \left(f_2(u+x_s)-f_2(x_s)\right) .
\]
Now, using the mean value theorem we have 
\[
\frac{f_1(v+y_s)-f_1(y_s)}{v}=f_1'(v_p) \quad  \text{and} \quad  \frac{f_2(u+x_s)-f_2(x_s)}{u}=f_2'(u_p) ,
\]
 where $v_p$ and $u_p$ are intermediate points. Hence,
\[
\dot L(u,v)=-m_2 |c_1| v^2 f_1'(v_p)
-m_1 |c_2| u^2 f_2'(u_p) ,
\]
yielding $\dot L(u,v) \leq 0$ and $\dot L(u,v)=0$ iff $u=v=0$. Therefore, $L$ is a Lyapunov function for System~\eqref{podst_ukl_shift} and $(0,0)$ is globally stable. This proves global stability of $(x_s,y_s)$ for System~(2).
\end{proof}

In general, we have also the following property of System~\eqref{podst_ukl_2}, which is independent of the model parameters.

\begin{theorem}\label{th:okres} There is no periodic solutions of System~\eqref{podst_ukl_2}.
\end{theorem}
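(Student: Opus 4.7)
The plan is to apply Bendixson's criterion (a one–line consequence of Green's theorem): if the divergence of a $C^1$ planar vector field has constant sign on a simply connected domain and does not vanish identically, then the vector field admits no closed orbit lying in that domain.

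First I would write System~\eqref{podst_ukl_2} in the form $(\dot x,\dot y)=F(x,y)$ with
\[
F_1(x,y)=-m_1 x + b_1 + c_1 f_1(y), \qquad F_2(x,y)=-m_2 y + b_2 + c_2 f_2(x).
\]
The crucial observation is that $F_1$ does not depend on $x$ through the nonlinear term, and $F_2$ does not depend on $y$ through the nonlinear term, so the partial derivatives needed for the divergence are trivial to compute:
\[
\operatorname{div} F(x,y)=\frac{\partial F_1}{\partial x}+\frac{\partial F_2}{\partial y}=-m_1-m_2.
\]

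Since we have postulated $m_1,m_2>0$, the divergence is a strictly negative constant on the whole plane $\mathbb{R}^2$, which is simply connected. A hypothetical periodic orbit $\gamma$ would bound a region $\Omega\subset\mathbb{R}^2$, and Green's theorem would give
\[
0=\oint_{\gamma}(F_1\,dy-F_2\,dx)=\iint_{\Omega}\operatorname{div} F\,dA=-(m_1+m_2)\,|\Omega|<0,
\]
a contradiction. Hence no periodic solution of System~\eqref{podst_ukl_2} exists.

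There is no real obstacle here: the argument is essentially a one-line divergence computation, and the assumption $m_1,m_2>0$ is exactly what is needed. I would note that the conclusion is genuinely independent of the signs of $c_1,c_2$, of the values of $b_1,b_2$, and of the specific form of the influence functions $f_i$; the only ingredient used is the $C^1$ regularity of $f_1,f_2$ (already guaranteed, since $f_i\in \mathbf{C}^2$) and the positivity of the forgetting coefficients.
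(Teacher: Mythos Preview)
Your proof is correct and is essentially the same as the paper's: the paper invokes the Dulac--Bendixson criterion with the Dulac function $B(x,y)\equiv 1$, which reduces exactly to your Bendixson divergence computation $\operatorname{div}F=-(m_1+m_2)<0$. Your additional Green's theorem justification and the remark on which hypotheses are actually used are accurate and make the argument slightly more self-contained, but the underlying idea is identical.
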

\begin{proof}
We use  Dulac--Bendixon Criterion. Let us define $B(x,y)=1$ for all $x$ and $y$. Then  the divergence of the vector field fulfils
\[\text{div} B\left(\frac{dx}{dt},\frac{dy}{dt}\right) =-\left( m_{1} + m_{2}\right)=A<0 .\]
 Thus, System~(2)
has no periodic solutions. 
\end{proof}

Clearly, Theorem~\ref{th:okres} implies that there are not limit cycles of System~\eqref{podst_ukl_2}. Moreover, whenever there is only one steady state and solutions remain in bounded regions in the phase space, then Poincar\'{e}--Bendixson Theorem yields the global stability of this state.

\subsection{Specific types of the model dynamics}\label{sec:res}

Clearly, whenever two people of the opposite attitude to each other meet, there is only one steady state. For 
$ f_{2}'(x_{s})f_{1}'(y_{s}) > -\frac{(m_{1} - m_{2})^{2}}{4c_{1}c_{2}}$, we have a~stable focus, so when 
partners are more similar in terms of the rate of returning to equilibrium  in solitude, the chance that their 
moods fluctuate at the beginning of the meeting is greater. For $ f_{2}'(x_{s})f_{1}'(y_{s}) < -\frac{(m_{1} - m_{2})^{2}}{4c_{1}c_{2}}$, 
we have a~stable node, and their moods at the meeting consistently approach the equilibrium typical for this 
particular pair. An example of such a behaviour is presented in Fig.~\ref{np1}.

\begin{figure}[!ht]
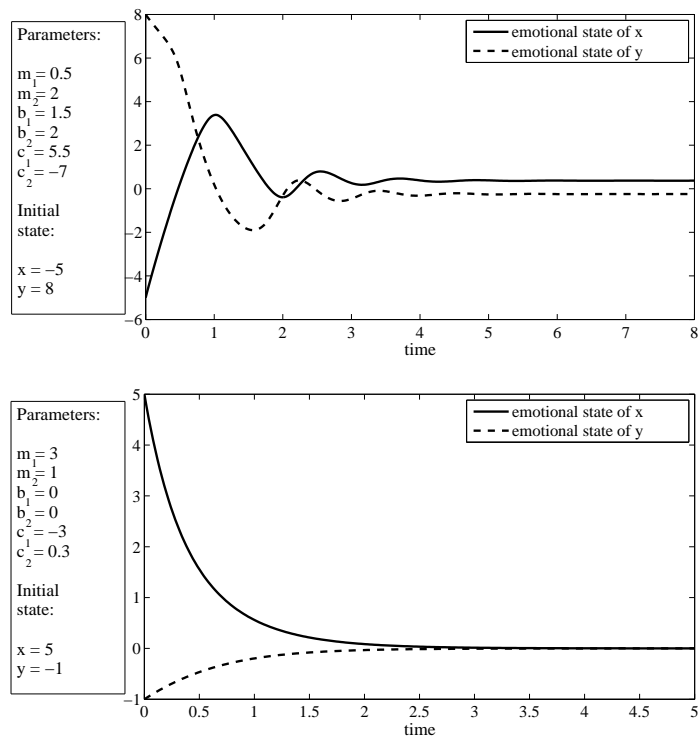

\centering\includegraphics[height=5cm]{np1a}\\
\includegraphics[height=5cm]{np2a}
\caption{Examples of changes in the emotional states of two people with the opposite attitude to each 
other. Top:  the steady state is a~stable focus and emotions oscillate. 
Bottom: the steady state is a stable node and emotions  slowly approach the equilibrium.}\label{np1}
\end{figure}

If the partners have the same attitude to each other, whether it is positive or negative, we get from one to three steady states. For partners having relatively weak effect to each other (\textit{i.e.} $ 0 <c_ {1} c_ {2} <m_ {1} m_ {2} $) or those with strong mutual influence with both being extreme optimists or pessimists, there is only one steady state which is a~stable node.

\begin{figure}[!ht]
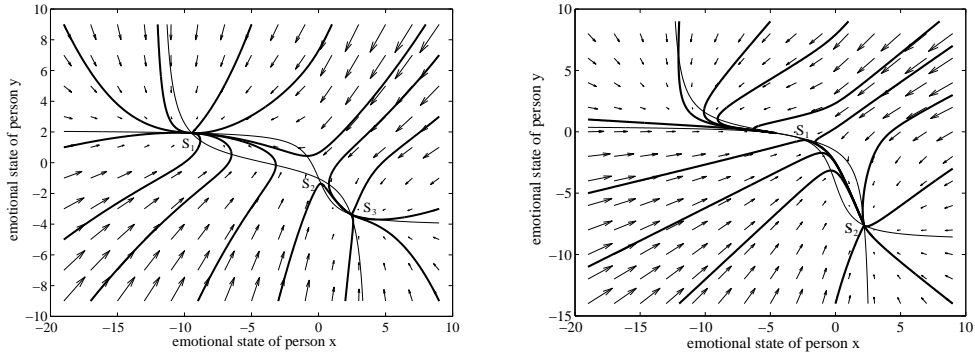

\centering
\includegraphics[height=5.0cm]{c1c2wieksze1m2_3}
\includegraphics[height=5.0cm]{c1c2wieksze1m2_2}
\caption{Left: phase portrait for System~(\ref{podst_ukl_2}) for $m_{1}=m_{2}=1$, $b_{1}=-5$, 
$b_{2}=-4.19$, $c_{1}=-5$ and $c_{2}=-3$. Right: phase portrait for System~(\ref{podst_ukl_2}) for 
$m_{1}=1$, $m_{2}=2$, $b_{1}=-4$, $b_{2}=-2$, $c_{1}=-5$ and $c_{2}=-4$.}\label{22}
\end{figure}

Different type of the model dynamics appears when the shift of the influence function
by a~$b_i/m_i$ is not large enough. When this shift is reduced, null-clines intersect at three points. 
A~steady state located between two others is a~saddle (see $S_2$ in Fig.~\ref {22} left). 
A generic solution of the system starting from some initial state goes from it and tends to one of the two stable steady states $S_1$ and $S_3$. Such type of the model dynamics is known as bi-stability; c.f.~\cite{Murray2002}.
For two enemies one of the stable steady states has positive coordinate for one person and negative for the second one, and the other steady state -- \textit{vice versa}. For two friends, one stable steady state is positive for both of them, and the other one is negative (result not shown).
 Two steady states appear due to the bifurcation, which is saddle--node in this case  (see Fig.~\ref{22} right). It is possible to achieve both of the steady states, but one of 
them (saddle-node point, that is $S_1$ in Fig.~\ref{22} right) is extremely sensitive to the changes of the model parameters.

\section{Results and their psychological interpretation}\label{sec:disc}
Basing on the analysis of the considered model, we are able to give some conclusions about the influence of 
pessimism and optimism on our social interactions. Clearly, people want to meet if they have a~chance to make 
a~profit from the meeting. Indeed, if one of them is always feeling worse after the meeting than when being 
alone, he will avoid meetings. It happens that one of the friends always initiates the meeting and the other one 
sometimes agrees on it, but very unwillingly. This happens when only one of them has a~profit from it. 
Even less chance of meeting have people who are getting worse humour after than before. 
Below we present the more detailed results of the analysis of probability of maintaining friendship for various 
couples depending on if their nature is similar or not. The results are illustrated by numerical simulations prepared 
using MATLAB with $\arctan$ chosen as influence functions.

\subsection{People with neutral uninfluenced emotional state}
We start from the case of people with neutral uninfluenced steady state which is described by the relation $b_1=b_2=0$. 

\begin{figure}[!ht]
\centering
\includegraphics[height=5.3cm]{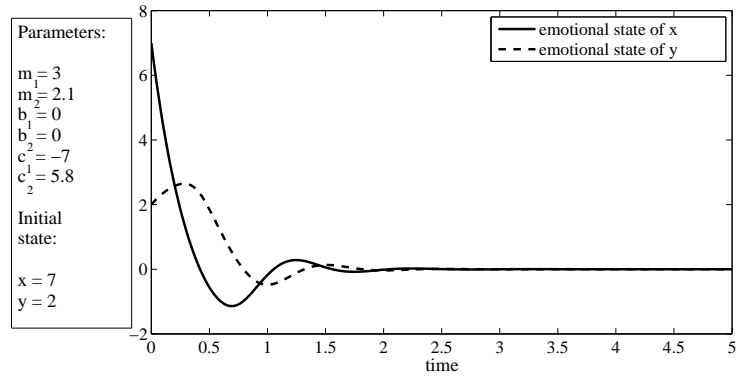}
\caption{ An example of the situation when two persons reach the same emotional state independently if they meet each other  or not.}\label{C.3}
\end{figure}

Generally, if a person like being alone, his/her solitude leads to apathy, and 
when two such persons meet, then the meeting does not influence their emotional states much, especially when their ability to calm emotions  is greater  than  the strength of influence of the partner. During the meeting of such persons, despite they are friends or enemies, their emotional states go monotonically to $0$. It can happen slower or faster than in solitude, but the final result is always the same. However, for persons with different attitudes to each other, there could be oscillatory behaviour, which is associated with the inequality\begin{equation}\label{(5.1)}
\sqrt{|c_1c_2|} > \frac{|m_1-m_2|}{2} .
\end{equation}
We see that when the partners are more similar to each other and their influence each other more, then the chance of oscillatory behaviour is greater, cf.~Fig.~\ref{C.3}, where the example of such  oscillations is presented. 

\begin{figure}[!ht]
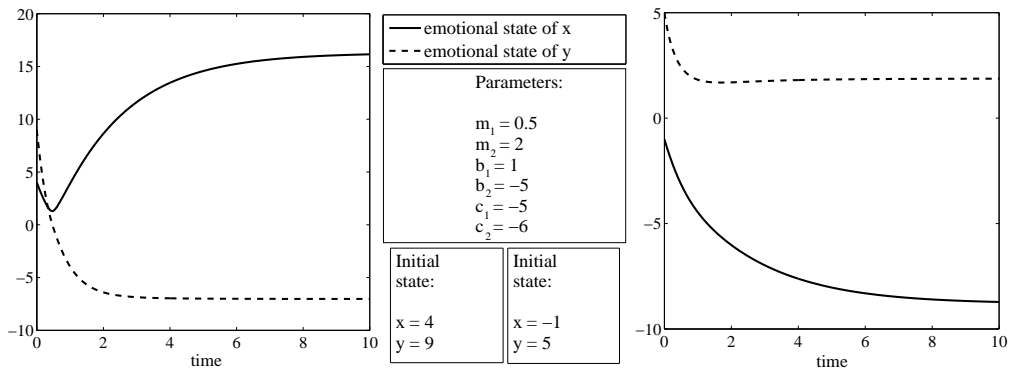

\centering
\includegraphics[height=5.25cm]{nn1}
\includegraphics[height=5.25cm]{nn2}
\caption{En example of two persons for which different initial conditions yield different opposite final emotional states.}\label{B.1}
\end{figure}

Notice, that the case $ m_1 = m_2$ is specific, as oscillations appear 
independently of the strength of influence. However, there are situations when the emotional states of 
such people change much during the meeting. It can happen for two friends or enemies, and if their are sufficiently 
interested in the partner's emotions  ($c_1c_2 > m_1m_2$), then depending on the initial data they could achieve 
some level of happiness or dissatisfaction. Two positively oriented persons reciprocate their each other emotions, 
while two negatively oriented persons have opposite emotions, which is visible on the phase portrait presented in 
Fig.~\ref{22} and two exemplary graphs of the emotional state in Fig.~\ref{B.1}. 
Moreover, in Fig.~\ref{B.1} we see that partners with stronger influence and weaker forgetting, stronger experience contacts with others comparing to persons weakly influenced. Although the graphs presented in Fig.~\ref{B.1} do not reflect the emotions of neutrally oriented persons, but their correctly 
reproduce the relations described above. 
What is interesting, in the presence of other people, emotions of the considered person could be enlarged, but on the other hand too high emotions could be repressed. 
An example of the situation when  the emotions change is presented in Fig.~\ref{A.2}. It reflects the meeting of two friends with one of them having a good mood and the other having bad one. Fig.~\ref{A.2} shows that at the beginning the emotional state of a sad person getting better, while the state of his friend is getting worse, but eventually  their overcome the bad mood and start to be pleased together. 

\begin{figure}[!ht]
\centering
\includegraphics[height=5cm]{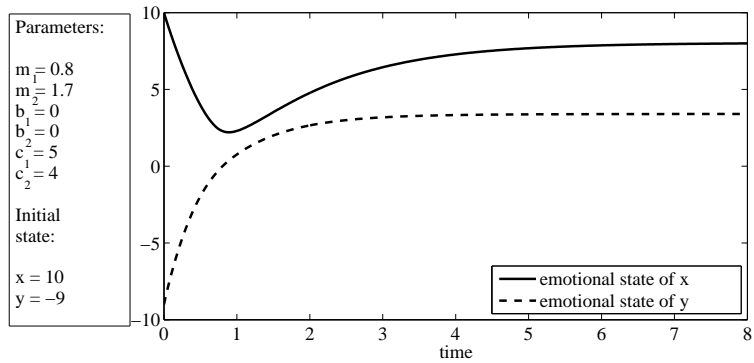}
\caption{An example of the change of emotional states of two friends with the opposite initial states.}\label{A.2}
\end{figure}

\begin{figure}[!ht]
\centering
\includegraphics[height=5cm]{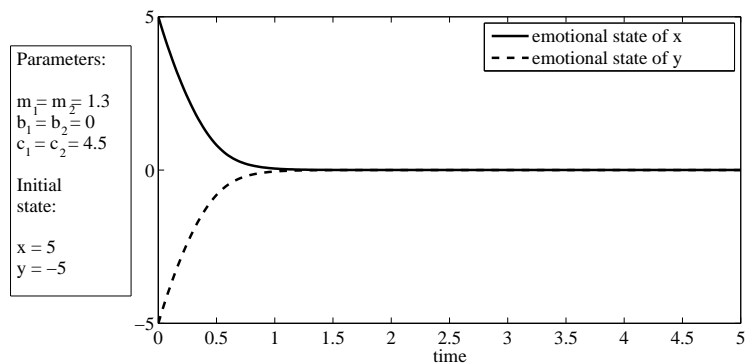}
\caption{Emotional states of two strongly dependent on each other friends with the same parameters describing both of them but with the opposite initial states.}\label{A.3}
\end{figure}

It can also happen that two friends having initially opposite emotions approach the neutral state after some time. 
From Section~\ref{sec:res} we know that their emotional states should tend to one of the stable steady states, for which both coefficients have the same sign. 
However, if the initial state lies within the stable manifold of the saddle,   
then the solution goes to this saddle. When the friends have the same characteristics  ($m_1 = m_2$,  $c_1 = c_2$,  $f_1 = f_2$), then their emotional states in solitude go to neutral state ($b_1 = b_2 = 0$) and the impact of negative and positive emotions is the same. In such a case the stable manifold is described as  the straight line  $y = -x$. Therefore, if the partners have initially  opposite emotional states, then they react in such a way their influence of each other is weak. This situation is reflected in Fig.~\ref{A.3}. 

Exactly the same result could be obtained for two enemies having initially the same emotions. It is obvious that in both cases the partners approach neutral steady state.  

\subsection{Change of dynamics initiated by one of the partners}
As we have mentioned above, two enemies can calm their emotions or one of the partners can enjoy the misfortune of the another. Clearly, this situation is comfortable only for the first person. The second person have two ways out. Firstly, he/she can finish the meeting, secondly, he can reverse the situation,  however the second possibility could be achieved only under some circumstances.
This person should  pretend to be a friend, and if Condition~\eqref{(5.1)} is satisfied, then oscillations of emotions will lead to neutral state. 
However, if the ``pro temporary friend'' comes back to his real state, then the final emotional state is the reverse of the initial one. The proper moment to change the behaviour from temporary friendship to hostility  depends on the model parameters. 

\begin{figure}[!ht]
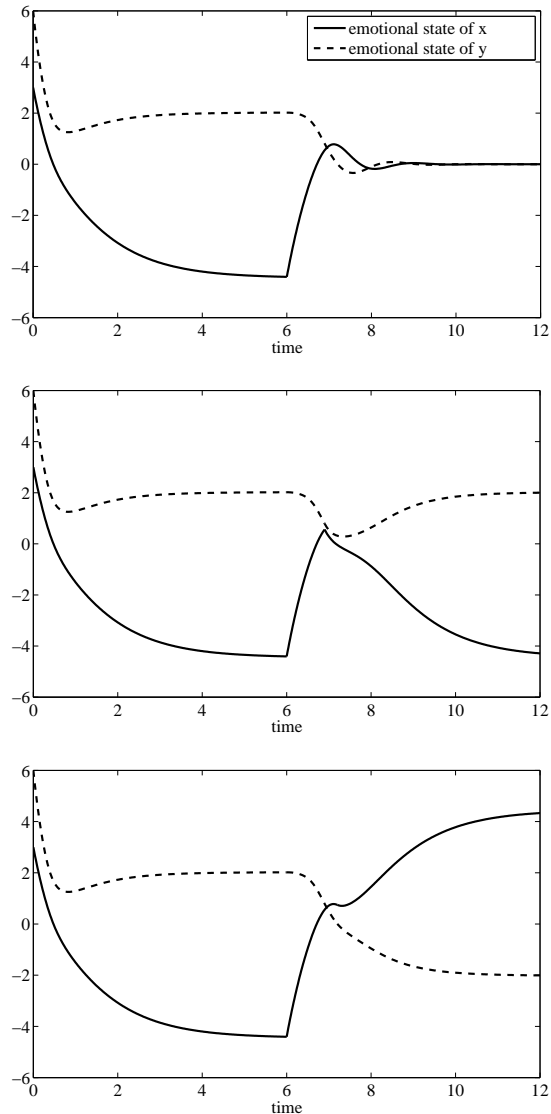

\centering
\includegraphics[height=5cm]{skladany1}
\includegraphics[height=5cm]{skladany3}
\includegraphics[height=5cm]{skladany2}
\caption{Possible changes of the dynamics of the emotional states initiated by one of the interacting partners.}\label{5.1}
\end{figure}

The possible changes of the emotional states in time are presented in Fig.~\ref{5.1}.

The  graphs show the changes  of the emotional states for two enemies. Top graph shows the solution of Eqs.~\eqref{podst_ukl_2} till $t = 6$, and  at  $t = 6$ the first person 
changes his attitude from negative to positive of the same strength. 
This leads to improving of the   emotional  state of the person $ x $ and 
decreasing of satisfaction of the person  $y$ to $0$. 
The middle graph shows the unsuccessful attempt of the change of the emotional state of the person $x$. This attempt is unsuccessful due to the early change of the attitude back to negative.
 The bottom graph shows
the situation in which the person changes his attitude to negative again at $ t = 7$, and this leads to increase his positive emotions together with negative emotions of the partner.  It should be noticed that even if Condition~\eqref{(5.1)} is not satisfied, the unhappy partner should change his attitude to the other (being his enemy), as both partners calm their emotions in such a case, as presented in the bottom graph in~Fig.~\ref{np1}. 


The line of reasoning presented above is able to explain so-called Stockholm syndrom, when kidnapped person 
starts to feel positive emotions for the kidnapper. This is just a smart defence mechanism, which is able to 
calm emotions of both the kidnapped  and kidnapper, and decrease the dangerous of kidnapper.

\subsection{Relationship of a negatively oriented pessimist}  
As it turns out, a~pessimist negatively oriented towards the partner may sustain the relationship with another pessimist who is positively oriented towards him. He may also be in relationship with a~pessimist similar to him whose reciprocal attitude is also negative, but having a~strong influence on each other, the pessimist will still have to ``take risk''.
In result, sometimes he will achieve positive and sometimes negative steady state. Favourable situation for both pessimists is shown in Fig.~\ref{nn2}, where the thin dotted straight line illustrates the steady state which is achieved by each of the enemies when spending time alone. Curves representing the emotional states of these two people are above the dotted line after some time, so they have benefits from the meeting.

\begin{figure}[!ht]
\centering
\includegraphics[height=5.3cm]{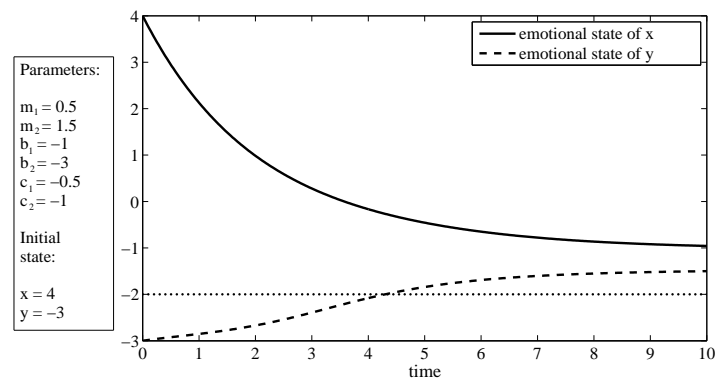}
\caption{
Graph of the emotional states of two pessimists who are enemies. Signs of the initial conditions are opposite.
}\label{nn2}
\end{figure}

\subsection{Relationship of a positively oriented pessimist}
When a~pessimist has a~positive attitude to his partner, he will have the chance to form a~relationship with a~moderate optimist, a~pessimist who does not like him, or an optimist who likes him. The more extreme optimist his partner would be, the better result could be achieved. This is the best company for a~pessimist. The only problem may be that optimists have more profit from contacts with other optimists, so meeting with a~pessimist may be disadvantageous, despite the seeming gain. 

\subsection{Relationship of an optimist}
For an optimist, it is better to spend the time on meeting with other optimists.
If the optimist has a~negative attitude towards the partner, he can sustain a~relationship with a~pessimist who likes him (if the severity of optimism and pessimism is similar) or alternately with a~partner who does not like him and is slightly optimistic or pessimistic. The strength of their influence on each other must be so large that there were two stable steady states. However, an optimist most willingly will meet with friends who think positively about life, as he is.


\section{Conclusions}\label{sec:conc}
From the model analysis several conclusions appear. 
First, only persons with neutral uninfluenced steady state are able to feel similarly in solitude and being with the partner. 
Second, for two enemies, it is enough that one of the partners changes his/her behaviour to obtain complete change of the emotional states of both of them. 
Next, pessimists, which is not surprising, may have greater 
difficulties with finding a~partner. However, they may have more varied contacts than optimists. Optimists can 
take advantages almost only of the mutual friendship and they should avoid people with negative attitude,  
whereas pessimists may like other people or not and they will reap the benefits from that and even though 
they have more opportunities than optimists, their contacts are less satisfactory than optimists contacts. 
Moreover, it happens that in order to make a~profit  they must have better mood than a~partner before the meeting. This situation is illustrated in Fig.~\ref{B.1}. 
The graphs show the relation between the emotional 
states of two partners negatively oriented
towards each other. The differences in these graphs are only due to different initial states of partners. In the 
first graph the partners approach  the steady state favourable to the person $ x $ being an optimist.
 This is despite the better initial mood of the person $ y $ being a~pessimist. In the second case, pessimism of 
the person $ y $ causes that the person ``has won'' his emotional state not too highly.

\begin{figure}[!ht]
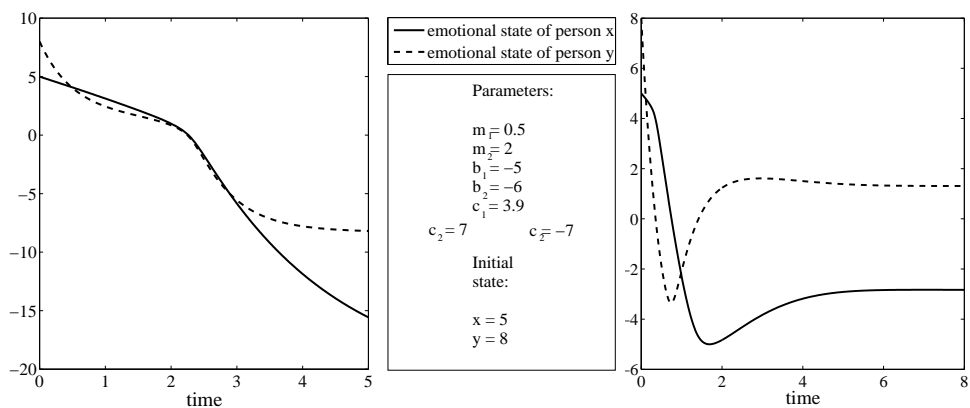

\centering
\includegraphics[height=5.8cm]{pp1aa}
\includegraphics[height=5.8cm]{pp2aa}
\caption{ Comparison of the dynamics of the emotional states of two pessimists when they are friends ($c_{1},c_{2} > 0$) and when they have inconsistent relations ($c_{1}>0$, $c_{2}<0$).}\label{pp1}
\end{figure}

An interesting fact which has stemmed from the analysis is that the pessimistic enemies may 
be not friendly to each other
 and have it enhance their moods, while pessimistic friends will mutually worsen their moods. Fig.~\ref {pp1} 
perfectly illustrates an example in which for two pessimists it is better to have a~different relationship to each 
other than friendship. The first graph shows 
that a~meeting of two friends always causes they approach an unprofitable state due to the strong pessimistic 
tendency of them both. This state is more negative than they could achieve being alone. As shown in the 
second graph, it would be advantageous for both of them to change attitudes to the partner who is more 
pessimistic. Then they both could have gained from the meeting.

At the end we should mark that people do not like to feel diametrically changing emotions, and therefore almost all people prefer to interact with partners having the same attitude. It particularly considers optimistic persons. 

The model presented in this paper is very simple but clear, and much more conclusions could be drawn from it. As we have noticed at Introduction and Model Description sections, it gives a possibility of different interpretations, such that we expect it could be farther exploited in the future. 

\section{Acknowledgement}
The part of results presented in this paper has been presented during the XIX National Conference of Applications of Mathematics in Biology and Medicine, Jastrz\c{e}bia G\'{o}ra, September 16–20, 2013~\cite{ziolkowka}.
 \medskip















\end{document}